\documentclass[11pt, a4paper]{article}

\usepackage{inputenc}
\usepackage{graphicx}
\usepackage{amssymb}
\usepackage{mathtools}
\usepackage{amsmath}
\usepackage{comment}
\usepackage{caption}
\usepackage{subcaption}
\usepackage{algorithm}
\usepackage{algorithmic}
\usepackage[colorlinks=true,linkcolor=black]{hyperref}
\usepackage{xcolor}
\usepackage{enumitem}
\usepackage{multicol}
\usepackage{multirow}
\usepackage{booktabs}

\newcommand{\R}{\mathbb R}



\newtheorem{definition}{Definition}
\newtheorem{theorem}{Theorem}

\newtheorem{proof}{Proof}

\begin{document}
	

\begin{center}
{\large\bf{
Rational activation functions in neural networks with uniform based loss functions and its application in classification.}  }
\end{center}

\begin{center}
	{Vinesha Peiris}
\end{center}

\begin{abstract}
In this paper, we demonstrate the application of generalised rational uniform (Chebyshev) approximation in neural networks. In particular, our activation functions are one degree rational functions and the loss function is based on the uniform norm. In this setting, when the coefficients of the rational activation function are fixed, the overall optimisation problem of the neural network forms a generalised rational uniform approximation problem where the weights and the bias of the network are the decision variables. To optimise the decision variables, we suggest using two prominent methods: the bisection method and the differential correction algorithm. 

We perform numerical experiments on classification problems with two classes and report the classification accuracy obtained by the network using the bisection method, differential correction algorithm along with the standard MATLAB toolbox which uses least square loss function. We show that the choice of the uniform norm based loss function with rational activation function and the bisection method lead to better classification accuracy when the training dataset is either very small or if the classes are imbalanced. 
\end{abstract}

{\bf Keywords}: 
Quasiconvex optimisation; Generalised rational uniform approximation; Rational activation functions; Deep learning.

{\bf MSC2010}: 
90C90,              
90C05, 
90C47,            
68T99.             

\section{Introduction} \label{sec:intro}

Deep learning is a branch of machine learning which is inspired by artificial neural networks. Recently, the study of deep learning has been an interesting area for many researchers due to its excellent performance (fast and efficient) in a variety of practical applications~\cite{CAO201817,goodfellow2016deep,lecun2015deep,raissi2018deep,min2017deep}. In general, a neural network contains layers (input, hidden, output) of interconnected nodes and these connections are known as weights. Hidden layers refine the weights coming from the previous layer with the help of a nonlinear activation function. From the point of view of optimisation, the main goal of a network is to optimise the weights by minimising the loss function. Hence, the choice of the activation function, the format of the loss function and the learning algorithm which minimises the loss function have a significant impact on the efficiency of the network.

Rational activation functions have drawn much attention lately due to its flexibility and smoothness~\cite{boull2020a,chen2018rational,delfosse2021recurrent,molina2019pade}. Telgarsky's theoretical work~\cite{telgarsky2017neural} on error bounds of the approximation to the ReLU network by rational functions and vice versa influenced several recent rational function adaptations to neural networks as an activation function~\cite{boull2020a,molina2019pade}.
In the most recent work~\cite{boull2020a}, authors study rational neural networks, whose activation functions are low degree rational functions with trainable coefficients. In all prior work on rational activation functions, the loss function is based on the least squares formulation. 

The least squares loss function is often employed for optimising the parameters (weights) of a neural network against a training set. This is due to the fact that basic optimisation techniques such as gradient descent may be effectively utilised to optimise the loss function. In some specific cases, the loss function can be successfully adapted to a uniform (Chebyshev) approximation based model where the maximum error is minimised. In particular, if the size of the data available for training is small but reliable or if the data is greately imbalanced, the uniform approximation based model performs much better than least square based models~\cite{peiris2021}. 

In this paper, we consider a simple neural network model with no hidden layers whose activation function is a classical rational function (ratio of two polynomials, the basis functions are just monomials) of degree (1,1) and the loss function is in the form of uniform approximation. The coefficients of the rational activation function are fixed by the best rational approximation to the ReLU function. In these circumstances, the overall optimisation problem of the network is naturally formulated as a generalised rational uniform approximation problem whose basis functions are the inputs of the neural network. Weights and the bias term are the decision variables to be optimised.

The generalised rational uniform approximations in the sense of Cheney and Loeb~\cite{cheney1964generalized} are well-studied and there are many efficient techniques to construct these approximations~\cite{DiffCorrection1972,cheney1961DCoriginal,loeb1960,trefethen2018,AMCPeirisSukhSharonUgon}. The differential correction method~\cite{DiffCorrection1972,cheney1961DCoriginal,cheney1962DC} is an iterative scheme which has guranteed convergence properties to find the optimal generalised rational approximation.
Bisection method is another simple, robust and modern optimisation based approach which uses the quasiconvexity property of the corresponding optimisation problems~\cite{SL}. The class of quasiconvex functions includes all the functions whose sublevel set is convex. Authors of~\cite{AMCPeirisSukhSharonUgon} use this method to construct univariate generalised rational uniform approximations and later, it was extend in several directions~\cite{millan2021multivariate,PeirisSukhorukova}.
In this paper, we prove that the relevent optimisation problems appearing in a simple neural network with one degree rational activation functions and uniform based loss function is quasiconvex and therefore, it can be solved using the bisection method.

We implement the differential correction algorithm and bisection method in MATLAB adapted to a simple neural network with no hidden layers. We perform our numerical experiments on classification problems with two classes and report the classification accuracy obtained by the network using the bisection method, differential correction algorithm along with the standard MATLAB toolbox which uses least square loss function. We highlight the benefits of the bisection method in connection with neural networks over the differential correction algorithm by comparing the computational time and the classification accuracy.

This paper is organised as follows. Section~\ref{sec:network_model} provides the notations and the model formulation of a simple neural network. In Section~\ref{sec:multi_gen_rat_Cheb}, we define generalised rational uniform approximations and includes definitions of quasiconvex functions. In Section~\ref{sec:our_model}, we introduce one degree rational activation functions and incorporate it into a simple neural network whose loss function is based on uniform approximation. Section~\ref{sec:train_the_model} discusses training of the network using the bisection method and the differential correction algorithm. Section~\ref{sec:numerical_experiments} provides numerical experiments. Finally, Section~\ref{sec:conclusions} explains conclusions and future research directions.

\section{Formulating a general neural network model} \label{sec:network_model}

Consider a neural network which consists of an input layer with $n$ nodes and an output layer with $m$ nodes. Let $x = (x_1,\dots, x_n)\in \R^n$ represents the input of the neural network and $y\in \R^m$ represents the output computed by the neural network. The weights between the input nodes and the output nodes are denoted by $w_{ij}$ where $i=1:m$, $j=1:n$. The forward propagation that pass the input to the next layer are computed by taking the linear combination between inputs and the corresponding weights.

$$z_i (x) = \sum_{j=1}^{n} w_{ij} x_{j} + b_j, \quad i=1:m$$
where $b_j, j=1:n$ is known as the bias term.  
If the activation function defined on the output layer is $\sigma$, then the output of the ntework is a single composition between $\sigma$ and $z_{i}$.

$$\displaystyle y_{i} (x) = \sigma(z_i (x)) = \sigma \left ( \sum_{j=1}^{n} w_{ij} x_{j} + b_j \right ), \quad i=1:m.$$

This can also be formulated in matrix notation as follows:
$$y(x) = \sigma({\bf W}x+b),$$
where $${\bf W} = 
\begin{bmatrix}
	w_{11} & w_{12} & \ldots & w_{1n}\\
	\vdots & \vdots & \ddots & \vdots\\
	w_{m1} & w_{m2} & \ldots & w_{mn}
\end{bmatrix}.$$

The training of a neural network is done by minimising the loss function (optimising the weights of the network). The loss function calculates how well the network produced output ($y_i$) describes the expected output values of a given training set. 

Let $\{(x^1,y^1),(x^2,y^2), \ldots, (x^N,y^N)\}$ be a training set where $x^i, i=1:N$ are the inputs and $y^i, i=1:N$ are the desired outputs. Weights and the bias term are the decision variables of the corresponding optimisation problems of the network and one needs to optimise the decision variables ($\bf W$, $b$) such that the error between the neural network output $y(x^i) = \sigma({\bf W}x^i+b),$ and the desired output of the training set $y^i$ has to be minimised in some sense. A common loss function used in neural networks is the least squares loss function,
\begin{equation*}
	L({\bf W},b) = \sum_{i=1}^N (y^i-\sigma({\bf W}x^i+b))^2.
\end{equation*}
Gradient descent algorithms can be accompanied to optimise the decision variables of the least sqaures loss function when training the network. There are many different types of loss functions used in neural networks, and~\cite{goodfellow2016deep} goes over a few of them in detail.

In this paper, we use uniform approximation based formula for the loss function,
\begin{equation*}
	L({\bf W},b) = \max_{\substack{i\in \{1:N\} \\ j\in \{1:m\}}} |y_{j}^i-\sigma_{j}({\bf W}x^i+b)|.
\end{equation*}
This is also known as Chebyshev norm and max norm. This loss function is much more efficient than the mean least squares based approach when the training set is reliable, but limited in size. 

In this research, we only consider a simple neural network with no hidden layers and we assume that the output layer consist of a single node. Then the uniform based loss function is
\begin{equation*}
	L({\bf W},b) = \max_{i=1:N} |y^i-\sigma({\bf W}x^i+b)|,
\end{equation*}
and ${\bf W}$ is now reduced to a row vector.

\section{Generalised rational uniform approximations} \label{sec:multi_gen_rat_Cheb}

In this section, we define the optimisation problems appearing in generalised rational uniform approximations and discuss some important definitions of quasiconvex functions. First, let us define the generalised rational functions in terms of Cheney and Loeb~\cite{cheney1964generalized}. In particular, these generalised rational functions are the ratios of linear forms,
\begin{equation}\label{eq:genrational}
	\bar{R}_{n,m}(x) =  P(x)/Q(x) = \frac{\sum_{i=0}^{n} p_i g_i(x)}{\sum_{j=0}^{m} q_jh_j(x)},
\end{equation}
where $g_i(x)$, $i=1:n$ and $h_j(x)$, $j=1:m$ are basis functions and $x \in [c,d]$, a closed interval on the real line. Functions from~(\ref{eq:genrational}) reduces to the classical rational functions when the basis functions are just monomials, i.e., ratio of two polynomials.

In the case of generalised rational uniform approximation, the optimisation problem is
\begin{equation}\label{eq:problem}
	\min \max_{x \in [c,d]}\left|f(x)-\frac{{\bf A}^T{\bf G}(x)}{{\bf B}^T{\bf H}(x)}\right|
\end{equation}
subject to
\begin{equation}\label{eq:positivity}
	{\bf B}^T{\bf H}(x)>0,~x\in [c,d],
\end{equation}
where 
\begin{itemize}
	\item $f(x)$ is the function to approximate,
	\item ${{\bf A}}=(a_0,a_1,\dots,a_n)^T\in\R^{n+1}, {{\bf B}}=(b_0,b_1,\dots,b_m)^T\in\R^{m+1}$ are the decision variables,
	\item ${{\bf G}}(x)=(g_0(x),\dots,g_n(x))^T$, ${{\bf H}}(x)=(h_0(x),\dots,h_m( x))^T$, where $g_j(x)$, $j=1:n$ and $h_i(x)$, $i=1:m$ are known basis functions.
\end{itemize}
 
Therefore, the approximations are ratios of linear combinations of basis functions. When all the basis functions are monomials, the problem is reduced to the best rational uniform approximation.

In~\cite{AMCPeirisSukhSharonUgon}, the authors proved that the objective function of~(\ref{eq:problem}) forms a quasiconvex function. The definition of quasiconvex functions and associated preliminary findings are included in the next section.

\subsection{Quasiconvex functions}  \label{ssec:quasiconvex_functions}

\begin{definition}  \label{def:quasiconvex_2}
	Function $ f(t)$ is quasiconvex if and only if its sublevel set 
	\begin{displaymath}
		S_{\alpha}=\{x |f(x) \leq \alpha \} 
	\end{displaymath}
	is  convex for any $\alpha \in \mathbb{R}$.  The set $S_{\alpha}$ is also called $\alpha$-sublevel set.
\end{definition}
It can be shown that this Definition~\ref{def:quasiconvex_2} is equivalent to the following definition~\cite{fenchel1953}.

\begin{definition} \label{def:quasiconvex_1}
	A function $ f : D \rightarrow \mathbb{R} $ defined on a convex subset $D $ of a real vector space is called  quasiconvex if and only if  for any pair $x$ and $y$ from $D$  and $\lambda\in [0,1]$ one has 
	\begin{displaymath}
		f (\lambda x + ( 1-\lambda ) y )
		\leq
		\max\{ f ( x ) , f ( y )\}. 
	\end{displaymath} 
\end{definition}

\begin{definition} \label{def:quasiconcave}
	Function $f$ is quasiconcave if and only if $-f$ is quasiconvex.
\end{definition}
This means that every superlevel set $ \bar{S}_{\alpha}=\{x |f (x ) \geq \alpha \} $ is convex.
\begin{definition} \label{def:quasiaffine}
	Functions that are quasiconvex and quasiconcave at the same time are called quasiaffine (sometimes quasilinear).
\end{definition}

Since the problem of generalised rational uniform approximation forms a quasiconvex function, it can be treated using a number of computational methods developed for quasiconvex optimisation~\cite{SL,DaCruzAlgorithmsQuasiconvex,DaniilidisHadjisavvasMartinezLegas2002,dutta2005abstract,MLegazquasiconvexduality,Rubinov00,RubinovSimsek}. One such method (called bisection method for quasiconvex functions~\cite{SL}) will be described in Section~\ref{ssec:bisection}.

The bisection method that we use in this paper is simple and robust, but it is not as efficient as some other methods, in particular, the differential correction method. Differential correction method does not require any special properties of the objective function to process. We discuss this further in Section~\ref{ssec:DC}.

\section{Network with rational activation functions and uniform norm based loss function} \label{sec:our_model}

We start this section by introducing rational activation functions. Activation functions play a cruicial role in the theory of neural networks. The main goal of defining an activation function on the output of a particular layer is to introduce nonlinearity to the network. The choice of the activation function depends on the type of application.
Sigmoid, logistic or hyperbolic tangent are popular smooth activation functions, but their derivatives are zero for large inputs which cause problems when the gradient descent algorithm is employed to optimise the loss function~\cite{bengio1994learning}. An attractive alternative is polynomial activation function~\cite{cheng2018polynomial,goyal2019poly,oh2003polynomial}. At the same time, they are not efficient when one needs to approximate nonsmooth (or non-Lipschitz) functions: high order polynomials are required, leading to severe oscillations and numerical instability. Recently, Rectified Linear Unit (ReLU) and its variants (Leaky ReLU, Exponential Linear Unit, Parametric ReLU, etc.) has been widely used by deep learning community due to its reduced likelihood of vanishing gradient.

\subsection{Rational activation function}  \label{ssec:rational_activation}

Let $R_{n,m}$ be the set of all rational functions of degree $n$ and $m$ where $n$ is the degree of the numerator and $m$ is the degree of the denominator ($n$ and $m$ are nonnegative integers) and $x \in [c,d]$ is a closed segment on the real line. 

\begin{equation} \label{eq:ratio_of_two_polynomials}
	\displaystyle 
	R_{n,m}(x) =  \frac{P(x)}{Q(x)} = \frac{\sum_{i=0}^{n} a_i x^i}{\sum_{j=0}^{m} b_j x^j} = \frac{a_0+a_1x+a_2x^2+ \ldots + a_nx^n}{b_0+b_1x+b_2x^2+ \ldots + b_nx^m}. 
\end{equation}
Functions from $R_{n,m}$ are also known as classical rational functions (ratio of two polynomials) where the denominator is strictly positive.

Telgarsky's theoretical work~\cite{telgarsky2017neural} inspired the use of rational functions in deep learning as an activation function. His work highlights the tight connections between rational functions and neural networks with ReLU activation functions. In~\cite{molina2019pade}, rational activation functions were introduced for the first time as Pad\'e Activation Units (PAU). In the most recent work, low degree rational functions have been employed on rational activation functions~\cite{boull2020a}. Moreover, their numerical experiments suggest that rational neural networks are an attractive alternation to ReLU networks. 

In both~\cite{boull2020a,molina2019pade}, $a_i, i=0:n$ and $b_j,j=1:m$ are learnable parameters of the network, meaning they are not fixed and they are optimised with the rest of the decision variables (weights and bias) of the optimisation problems of the network where the loss function is in the form of least squares. However, the optimisation process of~\cite{boull2020a} is initialised by the coefficients of the best rational approximation to ReLU function while~\cite{molina2019pade} uses the LReLU function. The idea behind this is to initialise the rational networks near a network with ReLU or LReLU activation functions~\cite{boull2020a} since rational functions effectively approximates neural networks with ReLU activations~\cite{telgarsky2017neural}. \cite{boull2020a,molina2019pade} also use slightly different normalisation conditions which are imposed to gurantee that the rational function is irreducible. In particular, authors of~\cite{molina2019pade} fixes the coefficient of the lowest degree monomial of the denominator at $1$ and authors of~\cite{boull2020a} uses $max|b_j| = 1, j=1:m$.

\subsection{Network with one degree rational activation function} \label{ssec:model_one_degree_rat}
Our neural network model is different from the above discussed ones from few different perspectives.
\begin{enumerate}	
	\item Our loss function is based on uniform approximation.
\end{enumerate}
Because of the nature of the uniform norm based loss function, counting the contribution of outliers in the dataset is possible, but least squares formulation discounts them by averaging. Therefore, our loss functions gurantee better classification accuracy if the training data is either limited in size or imbalanced~\cite{peiris2021}. 

This type of limited training datasets are available if the labeling of the dataset is done manually which can be both costly and time consuming in general. Sometimes, if the data are produced by a very expensive procedure, then the available number of data is limited in size~\cite{miles2014}. 
Moreover, in many robotics applications, it is not enough to have accurate performance on average; one requires the maximum variation to be within a specific range~\cite{meltser1996approximating}. There are many research devoted to classification accuracy with limited data or imbalanced class data~\cite{mazurowski2008training,bataineh2017}, however, most studies are based on the popular least squares loss function.

\begin{enumerate}[resume]	
	\item We only utilise one degree rational functions as activation functions and the coefficients of the rational activation are fixed by the best uniform rational approximation to the ReLU function between $[-1,1]$.
\end{enumerate}	
In particular, our activation function is
\begin{equation} \label{eq:our_act}
	R(x) = \frac{a_0 + a_1 x}{b_0 + b_1 x},
\end{equation}
where $b_0 + b_1 x > 0$. This function is smooth, simple and easy to work with in the uniform approximation based loss function. Most importantly, the objective function of the corresponding optimisation problem forms a quasiconvex function when the coefficients of the activation function are fixed.

We formulate our loss function as follows:
\begin{equation*} 
	L({\bf W},b) = \max_{i\in \{1:N\}} \left | y^i- R({\bf W}x^i+b) \right | ,
\end{equation*}
\begin{equation} \label{eq:our_loss}
	L({\bf W},b) = \max_{i\in \{1:N\}} \left | y^i- \frac{a_0+a_1 ({\bf W}x^i +b)}{b_0+b_1 ({\bf W}x^i +b)} \right |,
\end{equation}
where $b_0 + b_1 ({\bf W}x^i +b) > 0$ for all $i=1:N$. 

\begin{theorem}\label{thm:quasiconvexity}
	Function
	\begin{equation}\label{eq:phirational}
		L({\bf W},b) = \max_{i\in \{1:N\}} \left | y^i- \frac{a_0+a_1 ({\bf W}x^i +b)}{b_0+b_1 ({\bf W}x^i +b)} \right |
	\end{equation}
	is quasiconvex.
\end{theorem}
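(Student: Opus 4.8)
The plan is to reduce the statement to the convexity of sublevel sets, in the spirit of the argument used for problem~(\ref{eq:problem}) in~\cite{AMCPeirisSukhSharonUgon}, and to verify Definition~\ref{def:quasiconvex_2} directly. Write $\mathbf{v} = (\mathbf{W}, b) \in \R^{n+1}$ for the vector of decision variables and, for each sample index $i$, set $u_i(\mathbf{v}) = \mathbf{W}x^i + b = \langle (x^i_1,\dots,x^i_n,1),\, \mathbf{v}\rangle$. The first observation is that $u_i$ is a linear form in $\mathbf{v}$, so both $a_0 + a_1 u_i(\mathbf{v})$ and $b_0 + b_1 u_i(\mathbf{v})$ are affine in $\mathbf{v}$. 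Quasiconvexity will be established on the feasible region
\begin{equation*}
	D = \{\mathbf{v} \in \R^{n+1} : b_0 + b_1 u_i(\mathbf{v}) > 0,\ i = 1:N\},
\end{equation*}
which is an intersection of open half-spaces and hence convex, so that Definition~\ref{def:quasiconvex_2} is applicable on it.

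Next I would treat a single summand. Putting it over a common denominator yields
\begin{equation*}
	y^i - \frac{a_0 + a_1 u_i(\mathbf{v})}{b_0 + b_1 u_i(\mathbf{v})} = \frac{(y^i b_0 - a_0) + (y^i b_1 - a_1)\,u_i(\mathbf{v})}{b_0 + b_1 u_i(\mathbf{v})} =: \frac{L_1^i(\mathbf{v})}{L_2^i(\mathbf{v})},
\end{equation*}
a ratio of two affine functions of $\mathbf{v}$ with $L_2^i > 0$ on $D$. I then show that each map $\mathbf{v} \mapsto |L_1^i(\mathbf{v})/L_2^i(\mathbf{v})|$ is quasiconvex by exhibiting convex sublevel sets. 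For $\alpha < 0$ the $\alpha$-sublevel set is empty, hence convex. For $\alpha \ge 0$, using $L_2^i > 0$ to clear the denominator without reversing the inequality, the condition $|L_1^i/L_2^i| \le \alpha$ is equivalent to the pair of affine inequalities $L_1^i(\mathbf{v}) - \alpha L_2^i(\mathbf{v}) \le 0$ and $-L_1^i(\mathbf{v}) - \alpha L_2^i(\mathbf{v}) \le 0$; each defines a closed half-space, so the sublevel set is a finite intersection of half-spaces with $D$ and is therefore convex. Finally, the loss $L(\mathbf{W},b)$ in~(\ref{eq:phirational}) is the pointwise maximum of the $N$ terms just analysed, and since the $\alpha$-sublevel set of a maximum is the intersection of the individual $\alpha$-sublevel sets, which are convex, the maximum inherits quasiconvexity; this would complete the argument.

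The proof is short, so the main point requiring care is not an obstacle so much as a sign bookkeeping issue: clearing $L_2^i$ is legitimate only because the positivity constraint $b_0 + b_1(\mathbf{W}x^i + b) > 0$ keeps the denominator positive throughout $D$, and the clean split into two affine inequalities depends on $\alpha$ being nonnegative (the $\alpha < 0$ case must be disposed of separately). I also want to stress that I would \emph{not} invoke the theorem of~\cite{AMCPeirisSukhSharonUgon} as a black box: there the numerator and denominator carry separate decision variables $\mathbf{A}$ and $\mathbf{B}$, whereas here the single vector $\mathbf{v}$ enters both. Fortunately the linear-fractional structure in $\mathbf{v}$ survives the substitution, so the same sublevel-set mechanism goes through and only needs to be re-derived in the present notation.
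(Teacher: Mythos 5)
Your proof is correct, and it reaches the paper's conclusion by a more self-contained route than the paper itself takes. The paper's own argument shares the same structural core---it observes that, with $a_0,a_1,b_0,b_1$ fixed, $R(\mathbf{W}x^i+b)$ is a ratio of linear forms in the decision variables $(\mathbf{W},b)$---but it then cites \cite{SL} for the fact that such linear-fractional functions are quasilinear, writes $|w|=\max\{w,-w\}$ to express each term $\left|y^i-R(\mathbf{W}x^i+b)\right|$ as a maximum of two quasilinear (hence quasiconvex) functions, and concludes by closure of quasiconvexity under pointwise maxima. You instead absorb the constant $y^i$ into the fraction to get a single linear-fractional expression $L_1^i/L_2^i$ and verify Definition~\ref{def:quasiconvex_2} directly: clearing the positive denominator turns each $\alpha$-sublevel set (for $\alpha\ge 0$) into an intersection of half-spaces with the convex feasible region $D$, and the $\alpha<0$ case is empty. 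In effect you re-derive, in this specific setting, exactly the lemma the paper imports from \cite{SL}. What your version buys is self-containedness and some extra rigor: you make explicit the convex domain $D$ on which quasiconvexity is asserted (the paper leaves the positivity constraint implicit in this step), and you correctly observe that the result of \cite{AMCPeirisSukhSharonUgon} cannot be quoted as a black box here, since in that paper the numerator and denominator carry separate decision variables while here the single vector $(\mathbf{W},b)$ enters both---a point the paper sidesteps by citing \cite{SL} rather than \cite{AMCPeirisSukhSharonUgon} for this step. What the paper's version buys is brevity and a cleaner conceptual summary (quasilinearity of linear-fractional maps plus two closure rules). Both arguments ultimately rest on the same two facts: affine dependence of numerator and denominator on $(\mathbf{W},b)$, and positivity of the denominator on the feasible set.
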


\begin{proof}
	Let $$R({\bf W}x^i+b) = \frac{a_0+a_1 ({\bf W}x^i +b)}{b_0+b_1 ({\bf W}x^i +b)}, \quad i=1:N$$. 
	
	When $a_0, a_1$ and $b_0, b_1$ are fixed, for any known input value $x^i$, the numerator and the denominator of $R$ consist of linear combinations between known basis functions and the parameters ${\bf W},b$. Hence, $R$ is a ratio of two linear functions (ratio of linear forms).
	
	It is shown in~\cite{SL} that ratios of linear forms are quasilinear. Therefore, $R({\bf W}x^i+b)$ is quasilinear. 
	
	Since $|w| = max\{w,-w\}$, one can see that 
	$$L({\bf W},b) = \max_{i\in \{1:N\}} \max \{y^i-R({\bf W}x^i+b), -y^i+R({\bf W}x^i+b)\}$$ is the maximum of quasilinear functions. Quasilinear functions are quasiconvex. Maximum of quasiconvex functions is quasiconvex and hence, $L({\bf W},b)$ is quasiconvex.
\end{proof}

When $a_0, a_1$ and $b_0, b_1$ are fixed, for any pair of training input and desired output $(x^i,y^i)$, the loss function~(\ref{eq:our_loss}) falls under the category of  generalised rational uniform approximation. Various techniques can be used to optimise the decision variables of such approximation problems~\cite{DiffCorrection1972,cheney1961DCoriginal,AMCPeirisSukhSharonUgon}.

\begin{enumerate}[resume]
	\item We use two different algorithms to optimise the parameters of the loss function with two different normalisation conditions. Namely, the bisection method and the differential correction algorithm.
\end{enumerate}	
 
The decision variables of the optimisation problems appearing in our simple neural network are the weights ${\bf W}$ and the bias term $b$. We select two prominent methods to find the optimal weights of the netwrok. In subroutines of both methods, the undelying optimisation problems can be solved using linear programming techniques which are easy to implement in most languages. Bisection method uses the quasiconvex characteristic of the loss function~(\ref{eq:our_loss}) and to gurantee the irreducibility of~(\ref{eq:our_act}), we use the normalisation condition of $b_0=1$. Differential correction algorithm obtain the optimal solution by finding the descent direction at each step of the process and the usual normalisation condition for this method in practice is $max|b_j| = 1, j=0,1$.

{\bf Remark:} If one needs to use rational activation functions with higher degree (degree $\geq 2$ in the numerator or in the denominator), then the quasiconvex property is immediately lost and the application of bisection method is not possible. Moreover, if the coefficients of~(\ref{eq:our_act}) are cosidered to be learnable paramters of the network, rather than fixing them, then the corresponding optimisation problems are more complex and may need polynomial optimisation techniques to solve them. Since the purpose of this study is to investigate the most basic and simple model of the neural network with rational activation functions and uniform based loss function, we leave these research directions open for future studies.

\section{Training the model}  \label{sec:train_the_model}

In this section, our goal is to develop the implementation of the bisection method and the differential correction method for our setting. The optimisation problem that we need to solve is as follows:

\begin{equation}  \label{eq:loss_for_train}
	\min_{{\bf W},b} \max_{i\in \{1:N\}} \left | y^i- R({\bf W}x^i+b) \right | 
\end{equation}
subject to
\begin{equation} \label{eq:denom_for_train}
	b_0+b_1 ({\bf W}x^i +b) > 0, \quad i=1:N,
\end{equation}
where 
$$R({\bf W}x^i+b) = \frac{P({\bf W}x^i+b)}{Q({\bf W}x^i+b)} = \frac{a_0+a_1 ({\bf W}x^i +b)}{b_0+b_1 ({\bf W}x^i +b)}, \quad i=1:N,$$ 
${\bf W}$ is a vector consist of all the weights, $b$ is the bias term and $(x^i,y^i), i=1:N$ is the training set.

\subsection{Bisection method}  \label{ssec:bisection}

The bisection method is a simple, but efficient approach developed for minimising quasiconvex functions~\cite{SL}. The convergence of this method is linear. Theorem~\ref{thm:quasiconvexity} gurantees the quasiconvexity of the objective function~(\ref{eq:loss_for_train}) and therefore, we use the bisection algorithm to find the decision variables of the problem~(\ref{eq:loss_for_train})--(\ref{eq:denom_for_train}). As the normalisation condition, we use $b_0=1$.

Problem~(\ref{eq:loss_for_train})--(\ref{eq:denom_for_train}) can be also formulated as follows:
\begin{equation}  \label{eq:loss_for_train2}
	\min z
\end{equation}
subject to
\begin{equation} \label{eq:const1}
	y^i- R({\bf W}x^i+b) \leq z, \quad i=1:n
\end{equation}
\begin{equation} \label{eq:const2}
	R({\bf W}x^i+b) - y^i \leq z,  \quad i=1:n
\end{equation}
\begin{equation} \label{eq:const3}
	Q ({\bf W}x^i +b) > 0, \quad i=1:n
\end{equation}
Without loss of generality, the constraint~(\ref{eq:const3}) can be replaced by
\begin{equation*} \label{eq:const4}
	1+b_1 ({\bf W}x^i +b) = Q({\bf W}x^i+b) \geq \delta, \quad i=1:n,
\end{equation*}
where $\delta$ is an arbitrary positive number. To initialise the bisection method, one needs to define the following parameters:
\begin{itemize}
	\item The lower bound $l$.
	
	Since the objective function is nonnegative, one can choose $l =0$ as the lower bound for the optimal solution.
	
	\item The upper bound $u$.
	
	One can substitute any value of the decision variables ${\bf W}$ and $b$ in the objective function, for instance, ${\bf W} =0$ and $b=0$, then
	$$u = \max_{i\in \{1:N\}} \left | y^i- R(0) \right |. $$
	$R(0) = a_0$ due to the normalisation condition and therefore, the upper bound is
	$$u = \max_{i\in \{1:N\}} \left | y^i- a_0 \right |. $$

	\item The absolute precision for maximal deviation $\varepsilon$.
\end{itemize}

Set $z = \frac{u+l}{2}$ and check if the set of constraints~(\ref{eq:const1})--(\ref{eq:const3}) has a feasible solution. If this set is feasible, update the upper bound $u=z$, otherwise update the lower bound $l=z$. This procedure needs to be repeated until $u-l \leq \varepsilon$.

In general, checking the feasibility of a set may be difficult. However, in this case, the feasibility problem is reduced to solving a linear programming problem. When $z$ is fixed, one needs to solve the following problem to check the feasibility:
\begin{equation}  \label{eq:loss_for_train3}
	\min u
\end{equation}
subject to
\begin{equation} \label{eq:const5}
	y^i Q({\bf W}x^i+b)- P({\bf W}x^i+b) \leq z Q({\bf W}x^i+b) +u, \quad i=1:N
\end{equation}
\begin{equation} \label{eq:const6}
	P({\bf W}x^i+b) - y^i Q({\bf W}x^i+b) \leq z ({\bf W}x^i+b) +u,  \quad i=1:N
\end{equation}
\begin{equation} \label{eq:const7}
	Q({\bf W}x^i +b) \geq \delta, \quad i=1:N.
\end{equation}
If the optimal solution $u \leq 0$, then the set~(\ref{eq:const1})--(\ref{eq:const3}) has a feasible point, otherwise the set is empty. This feasibility problem~(\ref{eq:loss_for_train3})--(\ref{eq:const7}) is a linear programming problem with respect to its decision variables ${\bf W}, b$ and hence, it can be solved by using any standard linear programming technique.

\subsection{Differential correction method}  \label{ssec:DC}
There are a few different versions of the differential correction algorithm. The original differential correction method was first developed by Cheney and Loeb~\cite{cheney1961DCoriginal}. Later on, a slightly modified versions of the same original method were published in~\cite{cheney1962DC,cheney1963survey,rice1969}. Later on, in 1896, it was discovered that the differential correction method is similar to the Dinkelbach's method when applied to rational approximation problems~\cite{crouzeix1986note}. 

In general, differential correction method has sure convergence properties; it was proved in~\cite{DiffCorrection1972} that the original differential correction method is converging quadratically while the modified versions have linear convergence. Thus, in this section, we wish to present the original version of the differential correction method adapted to our network setting.

This algorithm is also an iterative process. To initialise the differential correction method, we need the following parameters:
\begin{itemize}
	\item Initialisation for the rational function.
	
	One can substitute any value of the decision variables, ${\bf W}$ and $b$, since this method gurantees the convergence from any initial point as long as the denominator is positive. Thereofore, we suggest to use the simplest possible initialisation which is ${\bf W} = 0$ and $b=0$. Then, the initial ratio will be $a_0/b_0$. By using this initial ratio, the maximal absolute deviation $\Delta$ can be found.
	
	\item The absolute precision for the maximal deviation $\varepsilon$.
	
\end{itemize}

Note that our decision variables are now ${\bf W}$ and $b$. At the $k$th step, $P_k({\bf W}x^i+b)$ and $Q_k({\bf W}x^i+b)$ are determined by minimising the auxiliary expression
\begin{equation} \label{eq:ODC}
	\max_{i=1:N} \left \{  \frac{\left |y^i Q_k({\bf W}x^i+b) - P_k({\bf W}x^i+b) \right | - \Delta_{k-1} Q({\bf W}x^i+b)}{Q_{k-1}({\bf W}x^i+b)} \right \}
\end{equation}
subject to the normalisation condition of $\max |w_j,b| = 1, j=1:n$ where $\Delta_{k-1}$ is the maximum absolute error from the previous step. Without loss of generality, the normalisation condition can be substituted by $-1 \leq w_j \leq 1, -1 \leq b \leq 1$ for all $j=1:n$~\cite{DiffCorrection1972}.

This auxiliary function in~(\ref{eq:ODC}) automatically maintains the constraint $Q({\bf W}x^i+b) >0, i=1:N$ and hence it need not to be incorporated into the linear programming problem formulation.

Problem~(\ref{eq:ODC}) can be formulated as follows:
\begin{equation}
	\min \bar{z}
\end{equation}
subject to
\begin{equation}
	y^i Q_k({\bf W}x^i+b) - P_k({\bf W}x^i+b) - \Delta_{k-1} Q({\bf W}x^i+b) \leq \bar{z}Q_{k-1}({\bf W}x^i+b),~ j=1:N
\end{equation}
\begin{equation}
	P_k({\bf W}x^i+b) - y^i Q_k({\bf W}x^i+b) - \Delta_{k-1} Q({\bf W}x^i+b)\leq \bar{z}Q_{k-1}({\bf W}x^i+b),~ j=1:N 
\end{equation}
\begin{equation}
	-1 \leq w_j,b \leq 1, \quad j=1:n.
\end{equation}

Since the constraints are linear with respect to its decision variables, one can use linear programming techniques to find the solution to this problem. This procedure needs to be repeated until $|\Delta_{k-1} - \Delta_k| \leq \epsilon$.

\section{Numerical experiments} \label{sec:numerical_experiments}

\subsection{Fixing the coefficient of the rational activation function}

The coefficients of our one degree rational activation function need to be fixed before we start the optimisation process of the neural network. In this section, we show the best rational approximations to ReLU and LReLU functions computed via bisection method and differential correction algorithm. The precision for both methods is set at $\epsilon=10^{-5}$. The computed coefficients were then used for the experiments in the next section.

Table~\ref{tab:ReLu} includes the computed coefficients of the rational $(1,1)$ approximation of the ReLU function by bisection and differential correction algorithm and the coefficients are very similar for both methods up to the 5th decimal place. This is not surprising since the main objective of both methods is the same. Figure~\ref{fig:ReLU_App_Err} shows the approximations computed by both methods side by side with the error curves. One can see that the approximations and the error curves are identical for both methods.

\begin{table}
	\centering
	\begin{tabular}{|c|c|c| }
		\hline
		Coefficients & Bisection method & Diff correction algorithm \\ 
		\hline
		$a_0$ & 0.118033131217831 & 0.118032919266854 \\
		\hline  
		$a_1$ & 0.309015630106715 & 0.309014524918010 \\
		\hline  
		$b_0$ & 1 & 1 \\
		\hline  
		$b_1$ & -0.618035002222233 & -0.618036788697690 \\
		\hline
		Absolute error & 0.118033601638151 & 0.118032919266855\\
		\hline 
	\end{tabular}
	\caption{The coefficients of the best rational approximation to ReLU function.} 
	\label{tab:ReLu}
\end{table}

\begin{figure}
	\centering
	\subfloat[Rational approximations.]{\label{fig:ReLUapp}\includegraphics[width=.49\textwidth]{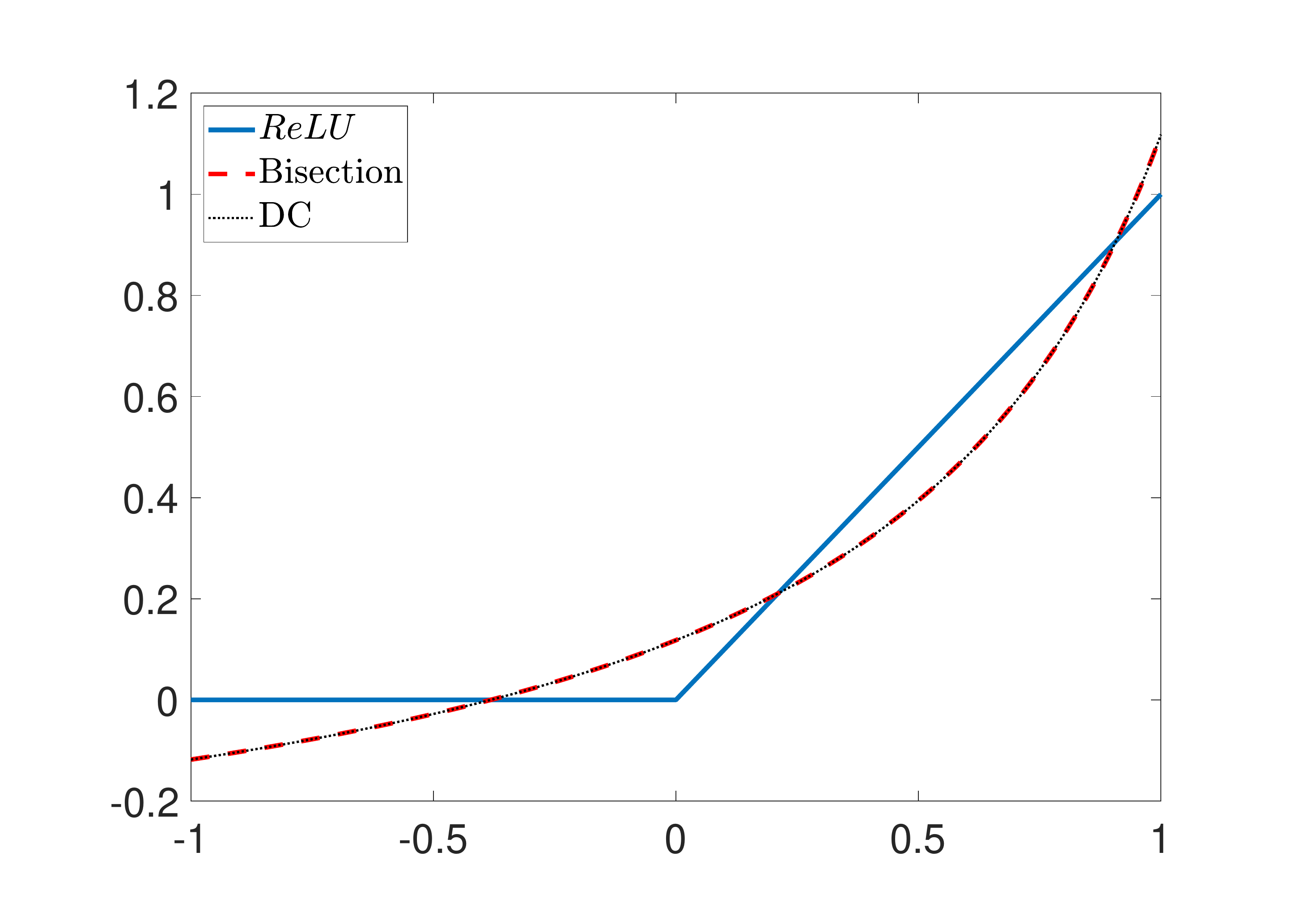}}
	\subfloat[Error curves.]{\label{fig:ReLUerr}\includegraphics[width=.49\textwidth]{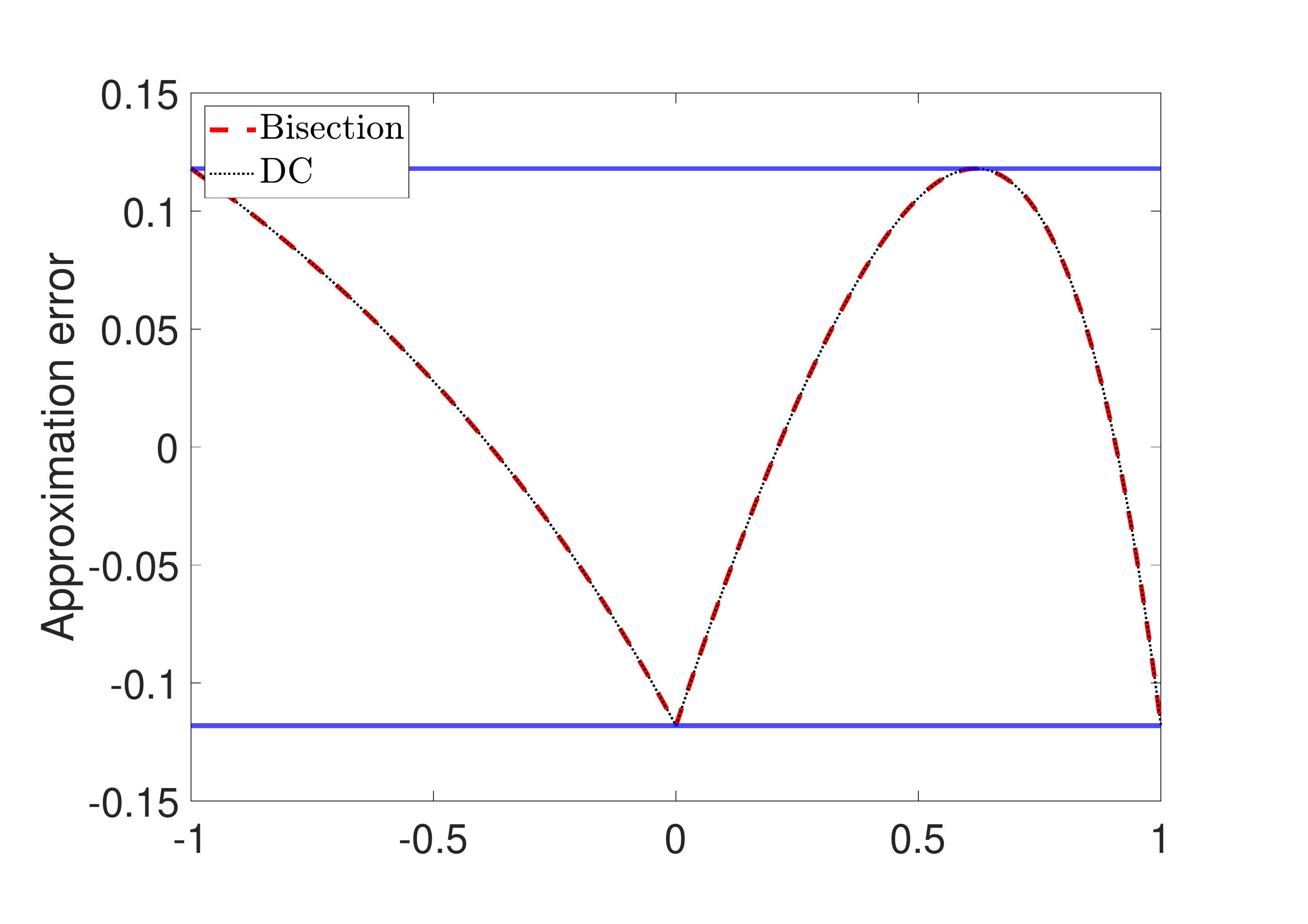}}
	\caption{Rational $(1,1)$ approximations computed by bisection method and differential correction algorithm and its error curves for the ReLU function.}
	\label{fig:ReLU_App_Err}
\end{figure}

Table~\ref{tab:LReLu} includes the computed coefficients of the rational $(1,1)$ approximation of the LReLU function by bisection and differential correction algorithm. Figure~\ref{fig:ReLU_App_Err} shows the approximations computed by both methods side by side with the error curves.

\begin{table}
	\centering
	\begin{tabular}{|c|c|c| }
		\hline
		Coefficients & Bisection method & Diff correction algorithm \\ 
		\hline
		$a_0$ & 0.115809696698241 & 0.115808917873690 \\
		\hline  
		$a_1$ & 0.318466101113600 & 0.318462124442619 \\
		\hline  
		$b_0$ & 1 & 1 \\
		\hline  
		$b_1$ & -0.610795142595234 & -0.610801602891053 \\
		\hline 
		Absolute error & 0.115811407705668 & 0.115808917873690\\
		\hline
	\end{tabular}
	\caption{The coefficients of the best rational approximation to LReLU function.} 
	\label{tab:LReLu}
\end{table}

\begin{figure}
	\centering
	\subfloat[Rational approximations.]{\label{fig:LReLUapp}\includegraphics[width=.49\textwidth]{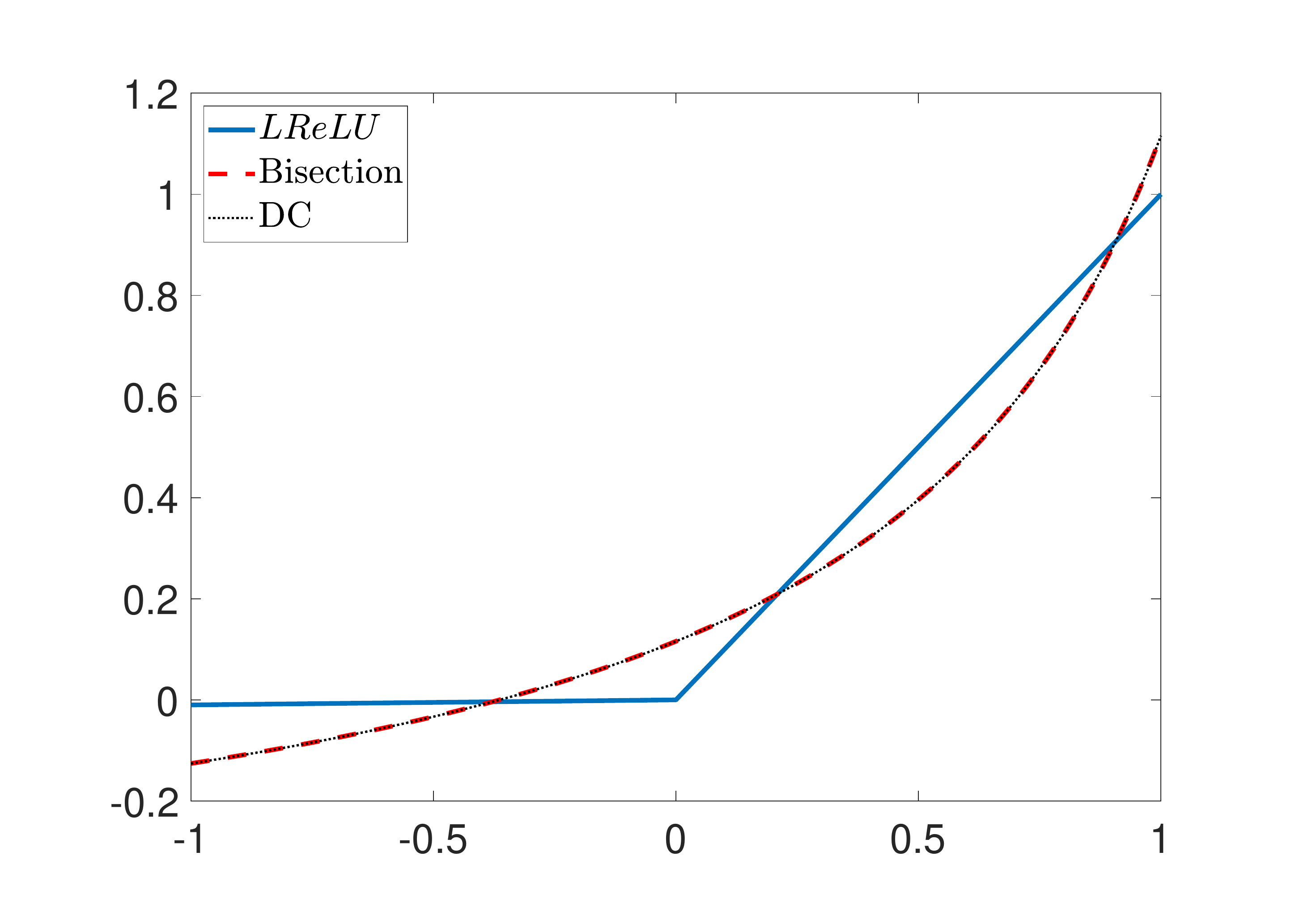}}
	\subfloat[Error curves.]{\label{fig:LReLUerr}\includegraphics[width=.49\textwidth]{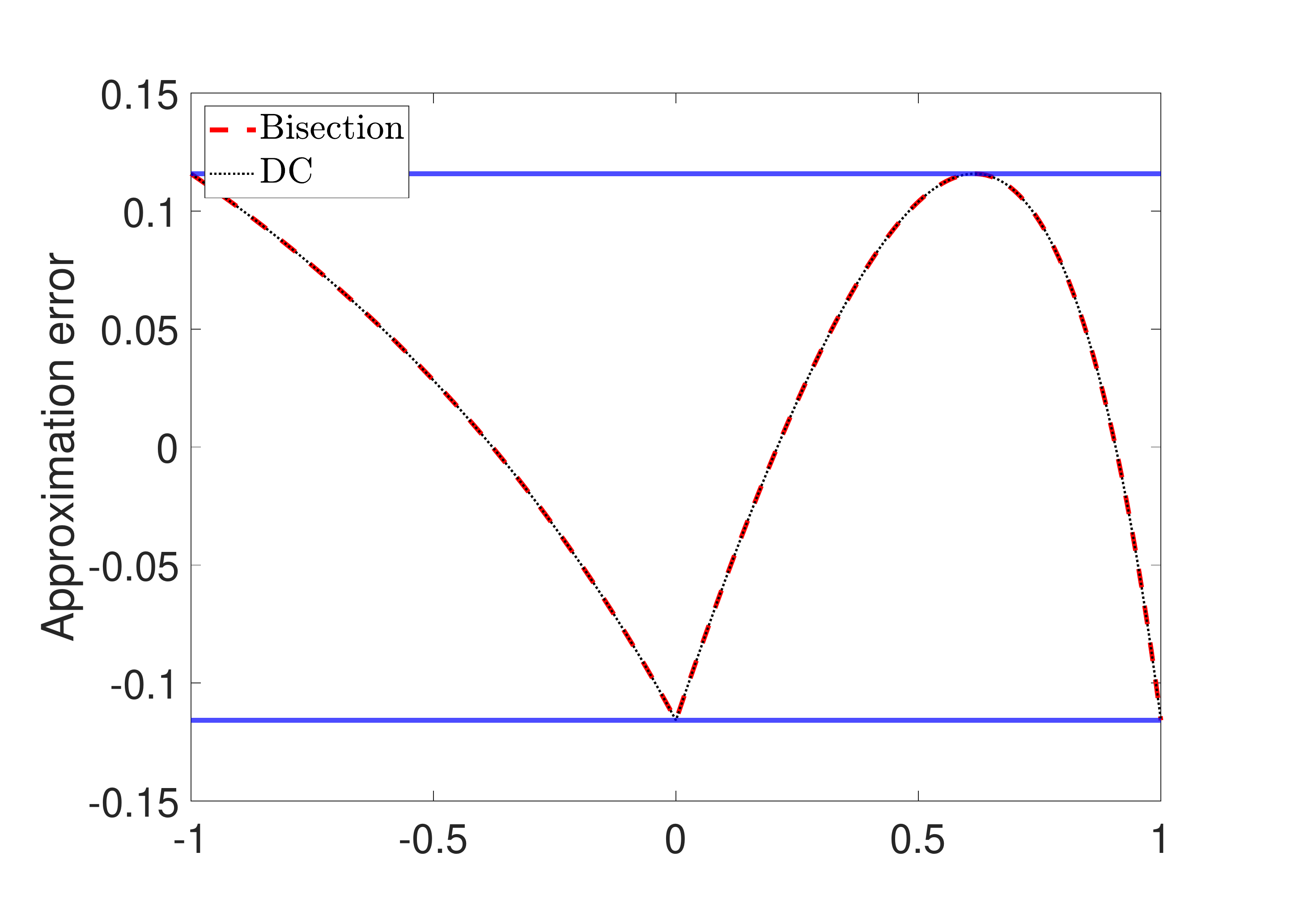}}
	\caption{Rational $(1,1)$ approximations computed by bisection method and differential correction algorithm and its error curves for LReLU function.}
	\label{fig:LReLU_App_Err}
\end{figure}

There are few things that one could notice. The difference between the sets of coefficients computed by the bisection methods and the differential correction algorithm is very small and it is at least within the precision~$\epsilon=10^{-3}$, but for most cases it is within the precision~$\epsilon=10^{-5}$. Even though we use a different normalisation condition for the differential correction algorithm, the coefficient of the smallest degree monomial turned out to be $1$. This observation is not surprising, as the normalisation condition forces the maximum absolute value of the coefficients in the denominator to be less than $1$. Clearly, The approximations are smooth and shape of the approximations are not very different for both ReLU and LReLU functions. However, the sets of coefficients are slightly different as expected. Both error curves comprise of $4$ maximal and minimal alternating points which gurantee that the current approximations are optimal.

Since the ReLU coefficients lead us to better classification accuracy, we only report the results of the experiments run with ReLU coefficients.

\subsection{Experiments with datasets}

The goal of this section is to compare the classification accuracy obtained by our simple neural network when the bisection method and the differential correction algorithm are employed to optimise the parameters of the network along with the classification accuracy obtained by the standard MATLAB toolbox. We intentionaly use datasets with limited training data or imbalanced class data since the uniform approximation based loss function works better on these types of sets.

\subsubsection{Experiments with TwoLeadECG dataset}

We begin our numerical experiments with one of the datasets from the famous PhysioNet~\cite{PhysioNet} database and we only focus on MIT-BIH Long-Term ECG Database. This MIT-BIH Long-Term ECG data collection contains 7 long-term ECG recordings, with manually reviewed beat annotations. We use the dataset, TwoLeadECG which is the seventh set of recordings from the MIT-BIH Long-Term ECG data base.

It contains two classes of signal; class 1 includes signals of type signal 0 (Figure~\ref{fig:signal0}) and class 2 consist of signals of type signal 1 (Figure~\ref{fig:signal1}). The main goal is to distinguish the signals between these two classes. 

The training set contains 23 recordings; 12 recordings from class 1, 11 recordings from class 2 and the test set contains 1139 recordings; 569 recordings from class 1, 570 recordings from class 2.

\begin{figure}
	\centering
	\subfloat[Signal 0.]{\label{fig:signal0}\includegraphics[width=.49\textwidth]{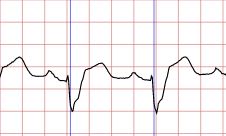}}
	\hspace{1mm}
	\subfloat[Signal 1.]{\label{fig:signal1}\includegraphics[width=.44\textwidth]{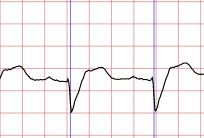}}
	\caption{Signals from the dataset TwoLeadECG data.}
	\label{fig:Signals}
\end{figure}

We use the default activation functions (softmax on the output) for MATLAB deep learning toolbox whose loss function is based on the least squares formula and we use one degree rational activation function for the networks with uniform based loss functions. We report the classification accuracy, confusion matrix, time to train the network and time to calculate the classification accuracy for the test set. We also report the loss function (objective function) values of train and test sets obtained by the bisection method and the differential correction method. 

A confusion matrix, also known as an error matrix, is a table layout that allows one to see how well a classifcation algorithm performs. The main diagonal of the confusion matrix corresponds to the correctly classified points where as the off-diagonal corresponds to misclassified points. Each row of the matrix corresponds to the actual class and each column of the matrix corresponds to the predicted class. Hence, the top left element of the matrix includes the correctly classified points from class one, top right element includes the missclassified points from class 1. 

\subsubsection{Experiments with the original dataset}
We start with the original dataset. Results of the experiments are shown in Table~\ref{tab:ClassOriginal} and Table~\ref{tab:LossOriginal}. 

\begin{table}[htbp]
	\centering
	\caption{Classification results for the original dataset.}
	\begin{tabular}{|c|c|c|c|}
		\hline
		\multirow{2}{*}{Method} & \multirow{2}[-2]{*}{MATLAB} & \multicolumn{2}{c|}{Uniform approximation based loss} \\
		\cline{3-4}          & Toolbox      & \multicolumn{1}{c|}{Bisection} & \multicolumn{1}{c|}{Diff Correction} \\
		\hline
		\multirow{2}[-1]{*}{Test classification} &  \multirow{2}{*}{$70.2\%$}     &  \multirow{2}{*}{$87.71\%$}     & \multirow{2}{*}{$55.31\%$}  \\
		accuracy & & &  \\
		\hline
		\multirow{2}[-1]{*}{Confusion matrix} 
		& \begin{tabular}{c|c}
			510  & 59 \\
			\hline
			280  & 290
		\end{tabular}
		
		& \begin{tabular}{c|c}
			535  & 34 \\
			\hline
			106  & 464
		\end{tabular}
		
		& \begin{tabular}{c|c}
			462  & 107 \\
			\hline
			402  & 168
		\end{tabular} \\
		\hline
		\multirow{2}[-2]{*}{Training time} & \multirow{2}[8]{*}{1.056093}      &  \multirow{2}[2]{*}{0.355755}     & \multirow{2}[2]{*}{104.523418} \\
		(in seconds) & & & \\
		\cline{1-1}\cline{3-4}
		\multirow{2}[-2]{*}{Testing time} &       & \multirow{2}[2]{*}{3.436044}      & \multirow{2}[2]{*}{3.994010} \\
		(in seconds) & & & \\
		\hline		
	\end{tabular}%
	\label{tab:ClassOriginal}%
\end{table}%

One can clearly see from Table~\ref{tab:ClassOriginal} that the bisection method is more accurate than the MATLAB toolbox and the differential correction algoritm. Classification accuracy obtained by the differential correction algorithm is the lowest. This may be due to the fact that the normalisation condition of the differential correction algorithm forces all the parameters of the network to be within the interval $[-1,1]$ resulting in underfitting of the model. Another reason could be that the differential correction method stops far from the optimal solution, since in general, it comes to a stop when the improvement in the descent direction is insufficient. Moreover, the differential correction algorithm takes more than a minute to train the network while bisection method takes less than a second.

\begin{table} [htbp]
	\centering
	\caption{Loss functions value of the original dataset.}
	\begin{tabular}{|c|c|c|}
		\hline
		Loss function value & Bisection & Diff correction\\
		\hline
		Training set & $1.345765\times 10^{-5}$  & $1.232949$ \\
		\hline
		Testing set & $40.665565$    & $1$ \\
		\hline
	\end{tabular}
	\label{tab:LossOriginal}
\end{table}

Table~\ref{tab:LossOriginal} contains the objective function values ccomputed by uniform based loss functions. For the training set, the bisection method achieves a smaller loss function (objective function) value than the differential correction method. Because there are no optimisation techniques employed when computing the classification accuracy for the test set, one can only compare the loss function values of the training set from an optimisation standpoint. A lower loss function value for the training set computed by the bisection method implies that it perfectly seperates the training data into the two classes. 

For the test set, the bisection method computes a higher loss function value. One possible explanation for the unusual high loss function value is overfitting of the model. Overfitting may happen due to the presence of a few extreme outliers in the test set which makes the loss function value much higher. To check this claim we remove some extreme points (outliers) from the test set. In particular, we remove the points whose absolute deviation is greater than a predefined threshlod ($\varepsilon$). When $\varepsilon = 10$, $18$ points from the test set were removed and classification accuracy was obtained. Accuracy was then slightly reduced to $87.51\%$ and the loss function value is reduced to $9.8705$, however, bisection still performs better. This reduction of the accuracy is not surprisng since the removal of the outliers reduces the overfitting of the model for the test set. 

We take one step further and set $\varepsilon = 5$. This reduces the test set to $1084$ samples by removing 55 outliers from the set. The classification accuracy is slightly reduced again to $87.18$ and the loss function value is now $4.8820$. This confirms that the overfitting of the model for the test set does not greatly affect the classification accuracy.

In general, we expect the bisection method and the differential correction algorithm to perform similarly, however, when it comes to training a simple neural network, these two methods perform differently in terms of the classification accuracy. This needs to be investigated further to find the reason behind this observation and we leave it for future research.

\subsubsection{Experiments with reduced dataset}
Now we make our training set even much smaller by randomly selecting 10 points from the original training set and compare the classification accuracy against the test set. The classification results can be found in Table~\ref{tab:ClassRandom}.

\begin{table}[htbp]
	\centering
	\caption{Classification results for the randomly selected points in the training dataset.}
	\begin{tabular}{|c|c|c|c|}
		\hline
		\multirow{2}{*}{Method} & \multirow{2}[-2]{*}{MATLAB} & \multicolumn{2}{c|}{Uniform approximation based loss} \\
		\cline{3-4}          & Toolbox      & \multicolumn{1}{c|}{Bisection} & \multicolumn{1}{c|}{Diff Correction} \\
		\hline
		\multirow{2}[-1]{*}{Test classification} &  \multirow{2}{*}{$65.5\%$}     &  \multirow{2}{*}{$79.10\%$}     & \multirow{2}{*}{$53.64\%$}  \\
		accuracy & & &  \\
		\hline
		\multirow{2}[-1]{*}{Confusion matrix} 
		& \begin{tabular}{c|c}
			209  & 306 \\
			\hline
			33  & 537
		\end{tabular}
		
		& \begin{tabular}{c|c}
			504  & 65 \\
			\hline
			173  & 397
		\end{tabular}
		
		& \begin{tabular}{c|c}
			565  & 4 \\
			\hline
			524  & 46
		\end{tabular} \\
		\hline
		\multirow{2}[-1]{*}{Training time} & \multirow{2}[8]{*}{3.266344}      &  \multirow{2}[2]{*}{1.861076}     & \multirow{2}[2]{*}{34.267841} \\
		(in seconds) & & & \\
		\cline{1-1}\cline{3-4}
		\multirow{2}[-1]{*}{Testing time} &       & \multirow{2}[2]{*}{3.673801}      & \multirow{2}[2]{*}{4.897909} \\
		(in seconds) & & & \\
		\hline		
	\end{tabular}%
	\label{tab:ClassRandom}%
\end{table}%

Clearly, from Table~\ref{tab:ClassRandom}, the classification accuracy for all three methods decreased. However, the bisection method outperforms both MATLAB toolbox and the differential correction method while demanding less time than the differential correction method to train and test the model.

\begin{table} [htbp]
	\centering
	\caption{Loss functions value of the reduced dataset.}
	\begin{tabular}{|c|c|c|}
		\hline
		Loss function value & Bisection & Diff correction\\
		\hline
		Training set & $1.345765\times 10^{-5}$  & $1.225767$ \\
		\hline
		Testing set & $30.5$    & $1$ \\
		\hline
	\end{tabular}
	\label{tab:LossRandom}
\end{table}

Table~\ref{tab:LossRandom} contains the objective function values. When the bisection method is employed, the loss function value for the training set is the same as the loss function value for the original training dataset. There is a small decreament in the loss function value of the differential correction algorithm for the training set compared to the original training set. Loss function value of the bisection method for testing set is reduced by $10$ units while it remains the same for the differential correction method.

Now we remove some extreme points from the test set whose absolute deviation is maximal or close to maximal where the threshold is set at $5$. In this case, $54$ points were removed from the original test set. The classification accuracy is then reduced to $78.06\%$ with the loss function value of $4.8957$. Bisection method is perform better than the other two methods.

\subsubsection{Experiments with imbalaned distribution between classes}
Our next step is to consider imbalanced distribution of the data points between classes. This means that one of the two classes is underrepresented in the corresponding training dataset. We consider two cases: in the first case, class 1 is underrepresented and in the case two, class 2 is underrepresented. 

{\bf Case 1:} Training set consist of all the points from class 2 with $2$ randomly selected points from class 1. Size of the training set is now $13 (11+2)$. The classification accuracy is given in Table~\ref{tab:ClassUnderClass1}. 

\begin{table}[htbp]
	\centering
	\caption{Classification results when class 1 is underrepresented in the training set.}
	\begin{tabular}{|c|c|c|c|}
		\hline
		\multirow{2}{*}{Method} & \multirow{2}[-2]{*}{MATLAB} & \multicolumn{2}{c|}{Uniform approximation based loss} \\
		\cline{3-4}          & Toolbox      & \multicolumn{1}{c|}{Bisection} & \multicolumn{1}{c|}{Diff Correction} \\
		\hline
		\multirow{2}[-1]{*}{Test classification} &  \multirow{2}{*}{$53.6\%$}     &  \multirow{2}{*}{$66.54\%$}     & \multirow{2}{*}{$53.99\%$}  \\
		accuracy & & &  \\
		\hline
		\multirow{2}[-1]{*}{Confusion matrix} 
		& \begin{tabular}{c|c}
			519  & 50 \\
			\hline
			92  & 478
		\end{tabular}
		
		& \begin{tabular}{c|c}
			300  & 269 \\
			\hline
			112  & 458
		\end{tabular}
		
		& \begin{tabular}{c|c}
			531  & 38 \\
			\hline
			486  & 84
		\end{tabular} \\
		\hline
		\multirow{2}[-1]{*}{Training time} & \multirow{2}[8]{*}{3.304957}      &  \multirow{2}[2]{*}{2.138431}     & \multirow{2}[2]{*}{1.550187} \\
		(in seconds) & & & \\
		\cline{1-1}\cline{3-4}
		\multirow{2}[-1]{*}{Testing time} &       & \multirow{2}[2]{*}{3.182606}      & \multirow{2}[2]{*}{2.309288} \\
		(in seconds) & & & \\
		\hline		
	\end{tabular}%
	\label{tab:ClassUnderClass1}%
\end{table}%

From Table~\ref{tab:ClassUnderClass1}, we see that the classification accuracy for all three method decreased. However, the accuracy calculated using the bisection method, continues to be superior. At the same time, it takes much longer to train the network and to test than the differential correction algorithm.

\begin{table} [htbp]
	\centering
	\caption{Loss functions value when class 1 is underrepresented.}
	\begin{tabular}{|c|c|c|}
		\hline
		Loss function value & Bisection & Diff correction\\
		\hline
		Training set & $1.345765\times 10^{-5}$  & $1.255002$ \\
		\hline
		Testing set & $3.300952$    & $13.3$ \\
		\hline
	\end{tabular}
	\label{tab:LossUnderClass1}
\end{table}

Table~\ref{tab:LossUnderClass1} consists of the objective function values. When the bisection method is employed, the value of the loss function computed for the test set has drastically been reduced while differential correction method achieves a higher loss function value. 

{\bf Case 2:} Training set consist of all the points from class 1 with $2$ randomly selected points from class 2. Size of the training set is now $14 (12+2)$. The classification accuracy is given in Table~\ref{tab:ClassUnderClass2}. 

\begin{table}[htbp]
	\centering
	\caption{Classification results when class 2 is underrepresented in the training set.}
	\begin{tabular}{|c|c|c|c|}
		\hline
		\multirow{2}{*}{Method} & \multirow{2}[-2]{*}{MATLAB} & \multicolumn{2}{c|}{Uniform approximation based loss} \\
		\cline{3-4}          & Toolbox      & \multicolumn{1}{c|}{Bisection} & \multicolumn{1}{c|}{Diff Correction} \\
		\hline
		\multirow{2}[-1]{*}{Test classification} &  \multirow{2}{*}{$54.7\%$}     &  \multirow{2}{*}{$65.84\%$}     & \multirow{2}{*}{$53.99\%$}  \\
		accuracy & & &  \\
		\hline
		\multirow{2}[-1]{*}{Confusion matrix} 
		& \begin{tabular}{c|c}
			63  & 506 \\
			\hline
			10  & 560
		\end{tabular}
		
		& \begin{tabular}{c|c}
			566  & 3 \\
			\hline
			386  & 184
		\end{tabular}
		
		& \begin{tabular}{c|c}
			566  & 3 \\
			\hline
			521  & 49
		\end{tabular} \\
		\hline
		\multirow{2}[-1]{*}{Training time} & \multirow{2}[8]{*}{3.358267}      &  \multirow{2}[2]{*}{1.374672}     & \multirow{2}[2]{*}{1.234968} \\
		(in seconds) & & & \\
		\cline{1-1}\cline{3-4}
		\multirow{2}[-1]{*}{Testing time} &       & \multirow{2}[2]{*}{3.868916}      & \multirow{2}[2]{*}{3.638340} \\
		(in seconds) & & & \\
		\hline	
	\end{tabular}%
	\label{tab:ClassUnderClass2}%
\end{table}%

The accuracy calculated using the bisection method, continues to be superior and the bisection method and the differential correction algorithm take roughly the same amount of time to train and test.

\begin{table} [htbp]
	\centering
	\caption{Loss functions value when class 2 is underrepresented.}
	\begin{tabular}{|c|c|c|}
		\hline
		Loss function value & Bisection & Diff correction\\
		\hline
		Training set & $1.345765\times 10^{-5}$  & $0.370170$ \\
		\hline
		Testing set & $13.177499$    & $269.4$ \\
		\hline
	\end{tabular}
	\label{tab:LossUnderClass2}
\end{table}

From Table~\ref{tab:LossUnderClass2}, one can see that the differential correction method achieves smaller loss function values for the training set compared to the pervious cases, but for the test set, it is much higher than before. We now remove some extreme points from the test set whose absolute deviation is maximal and we define the threshold, $\varepsilon=5$ for this case. Only $8$ points were removed from the original test set. Then we compute the classification accuracy by using the differential correction method. The classification accuracy is slightly reduced to $53.67\%$ with the loss function value of $4.4433$. One can reduce the threshold to remove more outliers from the test set and reduce the loss function value further.

\subsubsection{Experiments with SonyAIBORobotSurface1 dataset}

We now consider a different dataset from~\cite{SonyAIrobot}. The SONY AIBO Robot is a small, dog-shaped robot equipped with multiple sensors. In the experimental setting, the robot walked on two different surfaces: carpet and cement. Class 1 comprise of the data when the robot walked on the carpet and class 2 consist of the data of the robot when it walked on the cement floor. The main goal is to distinguish the type of floor that the robot walked on. 

The training set contains 20 recordings; 6 recordings from class 1, 14 recordings from class 2 and the test set contains 601 recordings; 343 recordings from class 1, 258 recordings from class 2.

\subsubsection{Experiments with the original dataset}
We start with the original dataset. Results of the experiments are shown in Table~\ref{tab:ClassOriginal2} and Table~\ref{tab:LossOriginal2}. 

\begin{table}[htbp]
	\centering
	\caption{Classification results for the original dataset.}
	\begin{tabular}{|c|c|c|c|}
		\hline
		\multirow{2}{*}{Method} & \multirow{2}[-2]{*}{MATLAB} & \multicolumn{2}{c|}{Uniform approximation based loss} \\
		\cline{3-4}          & Toolbox      & \multicolumn{1}{c|}{Bisection} & \multicolumn{1}{c|}{Diff Correction} \\
		\hline
		\multirow{2}[-1]{*}{Test classification} &  \multirow{2}{*}{$58.9\%$}     &  \multirow{2}{*}{$67.22\%$}     & \multirow{2}{*}{$63.06\%$}  \\
		accuracy & & &  \\
		\hline
		\multirow{2}[-1]{*}{Confusion matrix} 
		& \begin{tabular}{c|c}
			101  & 242 \\
			\hline
			5  & 253
		\end{tabular}
		
		& \begin{tabular}{c|c}
			235  & 108 \\
			\hline
			89  & 169
		\end{tabular}
		
		& \begin{tabular}{c|c}
			317  & 26 \\
			\hline
			196  & 62
		\end{tabular} \\
		\hline
		\multirow{2}[-2]{*}{Training time} & \multirow{2}[8]{*}{1.445051}      &  \multirow{2}[2]{*}{0.706029}     & \multirow{2}[2]{*}{13.420159} \\
		(in seconds) & & & \\
		\cline{1-1}\cline{3-4}
		\multirow{2}[-2]{*}{Testing time} &       & \multirow{2}[2]{*}{1.841726}      & \multirow{2}[2]{*}{2.224784} \\
		(in seconds) & & & \\
		\hline		
	\end{tabular}%
	\label{tab:ClassOriginal2}%
\end{table}%

One can clearly see that the bisection method is more accurate than the MATLAB toolbox and the differential correction algoritm. In this case, classification accuracy obtained by the MATLAB toolbox is the lowest. Bisection method still takes less time to train and test the model than the differential correction method. 

\begin{table} [htbp]
	\centering
	\caption{Loss functions value for the original dataset.}
	\begin{tabular}{|c|c|c|}
		\hline
		Loss function value & Bisection & Diff correction\\
		\hline
		Training set & $1.3142\times 10^{-8}$  & $1.2267$ \\
		\hline
		Testing set & $920.52$    & $1.1968$ \\
		\hline
	\end{tabular}
	\label{tab:LossOriginal2}
\end{table}

The loss function value computed by the bisection method for the training set is much smaller compared to the differential correction method and vise versa for the test set. 

Now we remove some extreme points (outliers) from the test set where $\varepsilon=5$. In particular, $83$ points were removed from the original test set. Then the classification accuracy computed by the bisection method for the test set is slightly reduced to $66.80\%$ where the loss function value is $4.5420$. Bisection method still achieves the highest classification accuracy.

\subsubsection{Experiments with reduced dataset}
Now we make our training set smaller by randomly selecting 10 points from the original training set and compare the classification accuracy against the test set. The classification results can be found in Table~\ref{tab:ClassRandom2}. 

\begin{table}[htbp]
	\centering
	\caption{Classification results for the randomly selected points in the training dataset.}
	\begin{tabular}{|c|c|c|c|}
		\hline
		\multirow{2}{*}{Method} & \multirow{2}[-2]{*}{MATLAB} & \multicolumn{2}{c|}{Uniform approximation based loss} \\
		\cline{3-4}          & Toolbox      & \multicolumn{1}{c|}{Bisection} & \multicolumn{1}{c|}{Diff Correction} \\
		\hline
		\multirow{2}[-1]{*}{Test classification} &  \multirow{2}{*}{$54.3\%$}     &  \multirow{2}{*}{$66.89\%$}     & \multirow{2}{*}{$57.74\%$}  \\
		accuracy & & &  \\
		\hline
		\multirow{2}[-1]{*}{Confusion matrix} 
		& \begin{tabular}{c|c}
			68  & 274 \\
			\hline
			0  & 258
		\end{tabular}
		
		& \begin{tabular}{c|c}
			190  & 153 \\
			\hline
			46  & 212
		\end{tabular}
		
		& \begin{tabular}{c|c}
			342  & 1 \\
			\hline
			253  & 5
		\end{tabular} \\
		\hline
		\multirow{2}[-1]{*}{Training time} & \multirow{2}[8]{*}{1.490741}      &  \multirow{2}[2]{*}{2.119713}     & \multirow{2}[2]{*}{11.368623} \\
		(in seconds) & & & \\
		\cline{1-1}\cline{3-4}
		\multirow{2}[-1]{*}{Testing time} &       & \multirow{2}[2]{*}{2.171950}      & \multirow{2}[2]{*}{1.965314} \\
		(in seconds) & & & \\
		\hline		
	\end{tabular}%
	\label{tab:ClassRandom2}%
\end{table}%

All three techniques have lower classification accuracy than the original dataset. However, the decrement of the accuracy of the bisection method is very small compared to the other two methods, at the same time, accuracy obtained by the bisection method and the differential correction method is not too far apart. Bisection method takes about the same amount of time to train and test the model whereas the differential correction method takes less time to test the model than the bisection method.

\begin{table} [htbp]
	\centering
	\caption{Loss functions value for the reduced dataset.}
	\begin{tabular}{|c|c|c|}
		\hline
		Loss function value & Bisection & Diff correction\\
		\hline
		Training set & $1.3458\times 10^{-5}$  & $1.1854$ \\
		\hline
		Testing set & $19.3$    & $1$ \\
		\hline
	\end{tabular}
	\label{tab:LossRandom2}
\end{table}

Table~\ref{tab:LossRandom2} contains the objective function values of the uniform based loss functions. The value of the loss function for the training set is much smaller for the bisection method, but for the test test, it is much higher compared to the differential correction method.

\subsubsection{Experiments with imbalaned distribution between classes}
Now we consider imbalanced distribution of the data points between classes. 

{\bf Case 1:} Training set consist of all the points from class 2 with $2$ randomly selected points from class 1. Size of the training set is now $13 (11+2)$. The classification accuracy is given in Table~\ref{tab:ClassUnderClass12}. 

\begin{table}[htbp]
	\centering
	\caption{Classification results when class 1 is underrepresented in the training set.}
	\begin{tabular}{|c|c|c|c|}
		\hline
		\multirow{2}{*}{Method} & \multirow{2}[-2]{*}{MATLAB} & \multicolumn{2}{c|}{Uniform approximation based loss} \\
		\cline{3-4}          & Toolbox      & \multicolumn{1}{c|}{Bisection} & \multicolumn{1}{c|}{Diff Correction} \\
		\hline
		\multirow{2}[-1]{*}{Test classification} &  \multirow{2}{*}{$52.2\%$}     &  \multirow{2}{*}{$73.54\%$}     & \multirow{2}{*}{$65.06\%$}  \\
		accuracy & & &  \\
		\hline
		\multirow{2}[-1]{*}{Confusion matrix} 
		& \begin{tabular}{c|c}
			77  & 266 \\
			\hline
			3  & 255
		\end{tabular}
		
		& \begin{tabular}{c|c}
			220  & 123 \\
			\hline
			36  & 222
		\end{tabular}
		
		& \begin{tabular}{c|c}
			255  & 88 \\
			\hline
			122  & 136
		\end{tabular} \\
		\hline
		\multirow{2}[-1]{*}{Training time} & \multirow{2}[8]{*}{1.308744}      &  \multirow{2}[2]{*}{1.928491}     & \multirow{2}[2]{*}{1.485842} \\
		(in seconds) & & & \\
		\cline{1-1}\cline{3-4}
		\multirow{2}[-1]{*}{Testing time} &       & \multirow{2}[2]{*}{2.153214}      & \multirow{2}[2]{*}{1.985884} \\
		(in seconds) & & & \\
		\hline		
	\end{tabular}%
	\label{tab:ClassUnderClass12}%
\end{table}%

Clearly, the classification accuracy computed by the MATLAB toolbox decreased, but the accuracy for the uniform based loss functions increased. The accuracy calculated using the bisection method, continues to be superior. At the same time, it takes longer to test the model than the differential correction algorithm. 

\begin{table} [htbp]
	\centering
	\caption{Loss functions value when class 1 is underrepresented.}
	\begin{tabular}{|c|c|c|}
		\hline
		Loss function value & Bisection & Diff correction\\
		\hline
		Training set & $1.3458\times 10^{-5}$  & $0.4259$ \\
		\hline
		Testing set & $205.722$    & $1$ \\
		\hline
	\end{tabular}
	\label{tab:LossUnderClass12}
\end{table}

From Table~\ref{tab:LossUnderClass12}, one can see that the value of the loss function computed by the bisection method for the training set continues to be the same as the previous cases and the loss function value for the test set is higher in general.

We remove some extreme points from the test set whose absolute deviation is maximal or close to maximal by setting $\varepsilon=5$. In this case, $62$ points were removed from the original test set. The classification accuracy is slightly reduced to $73.21\%$ where the loss function value is $4.8615$. Bisection method still performs better than the other two methods.

{\bf Case 2:} Training set consist of all the points from class 1 with 2 randomly selected points from class 2. Size of the training set is now $14 (12+2)$. The classification accuracy is given in Table~\ref{tab:ClassUnderClass22}. 

\begin{table}[htbp]
	\centering
	\caption{Classification results when class 2 is underrepresented in the training set.}
	\begin{tabular}{|c|c|c|c|}
		\hline
		\multirow{2}{*}{Method} & \multirow{2}[-2]{*}{MATLAB} & \multicolumn{2}{c|}{Uniform approximation based loss} \\
		\cline{3-4}          & Toolbox      & \multicolumn{1}{c|}{Bisection} & \multicolumn{1}{c|}{Diff Correction} \\
		\hline
		\multirow{2}[-1]{*}{Test classification} &  \multirow{2}{*}{$57.9\%$}     &  \multirow{2}{*}{$66.22\%$}     & \multirow{2}{*}{$61.90\%$}  \\
		accuracy & & &  \\
		\hline
		\multirow{2}[-1]{*}{Confusion matrix} 
		& \begin{tabular}{c|c}
			302  & 41 \\
			\hline
			212  & 46
		\end{tabular}
		
		& \begin{tabular}{c|c}
			208  & 135 \\
			\hline
			68  & 190	
		\end{tabular}
		
		& \begin{tabular}{c|c}
			219  & 124 \\
			\hline
			105  & 153
		\end{tabular} \\
		\hline
		\multirow{2}[-1]{*}{Training time} & \multirow{2}[8]{*}{1.511403}      &  \multirow{2}[2]{*}{1.848319}     & \multirow{2}[2]{*}{1.291203} \\
		(in seconds) & & & \\
		\cline{1-1}\cline{3-4}
		\multirow{2}[-1]{*}{Testing time} &       & \multirow{2}[2]{*}{2.036241}      & \multirow{2}[2]{*}{1.965994} \\
		(in seconds) & & & \\
		\hline	
	\end{tabular}%
	\label{tab:ClassUnderClass22}%
\end{table}%

The accuracy calculated using the bisection method, continues to be superior and the bisection method and the differential correction algorithm take roughly the same amount of time to test.

\begin{table} [htbp]
	\centering
	\caption{Loss functions value when class 2 is underrepresented.}
	\begin{tabular}{|c|c|c|}
		\hline
		Loss function value & Bisection & Diff correction\\
		\hline
		Training set & $1.3488\times 10^{-5}$  & $0.2005$ \\
		\hline
		Testing set & $558.2861$    & $1$ \\
		\hline
	\end{tabular}
	\label{tab:LossUnderClass22}
\end{table}

From Table~\ref{tab:LossUnderClass22} which consists of the objective function values, one can see that the value of the loss function computed by the bisection method for the training set is still smaller than that of the differential correction method, but for the test tes, bisection method computes a very large value.

Now we remove some outliers from the test set whose absolute deviation is maximal or close to maximal by setting $\varepsilon=5$. In particular, $32$ points were removed from the original test set. The classification accuracy is slightly reduced to $65.44\%$ where the loss function value is $4.0802$. Bisection method still performs better than the other two methods.

In general, bisection method performs consistently better than the MATLAB toolbox and the differential correction method when training simple neural network in classification problems. In particular, when the activations functions are one degree rational functions.

\section{Conclusions}  \label{sec:conclusions}

In this paper, we used one degree classical rational functions as activation functions in a simple neural network whose loss function is based on uniform approximation. In this setting, we demonstarted that the corresponding optimisation problems appearing in the network form a generalised rational uniform approximation problem when the coefficients of the rational function are fixed. To find the weights and the bias of the network, we used two methods: the bisection methods and the differential correction algorithm. Our numerical experiments performed on classification problems confirm that the bisection method returns superior accuracy results when the training set is either small or imbalanced between the classes.

In the future, we are planning to continue our work in the direction of extending the use of one degree classical rational activation function to a network with one or more hidden layers. This generalisation is possible if we sandwich layers on top of each other since the quasiconvexity property is preserved. We would also like to study the optimisation problems appearing in neural network when the degree of the rational activation function is more than one. Another interesting research direction is to investigate the reasons behind the low accuracy results obtained by the differential correction algorithm even though it has sure convergence properties in general.

\vskip 6mm
\noindent{\bf Acknowledgements}

\noindent
This research was supported by the Australian Research Council (ARC), Solving hard Chebyshev approximation problems through nonsmooth analysis (Discovery Project DP180100602).


\begin{thebibliography}{99}
	
	\bibitem{DiffCorrection1972}
	I.~Barrodale, MJD. Powell, and FDK. Roberts.
	\newblock The differential correction algorithm for rational
	$l_\infty$-approximation.
	\newblock {\em SIAM Journal on Numerical Analysis}, 9(3):493--504, 1972.
	
	\bibitem{bengio1994learning}
	Y.~Bengio, P.~Simard, and P.~Frasconi.
	\newblock Learning long-term dependencies with gradient descent is difficult.
	\newblock {\em IEEE transactions on neural networks}, 5(2):157--166, 1994.
	
	\bibitem{boull2020a}
	N.~Boullé, Y.~Nakatsukasa, and A.~Townsend.
	\newblock Rational neural networks.
	\newblock Conference on Neural Information Processing Systems, 2020.
	
	\bibitem{SL}
	S.~Boyd and L.~Vandenberghe.
	\newblock {\em Convex Optimization}.
	\newblock Cambridge University Press, New York, NY, USA, 2010.
	
	\bibitem{CAO201817}
	C.~Cao, F.~Liu, H.~Tan, D.~Song, W.~Shu, W.~Li, Y.~Zhou, X.~Bo, and Z.~Xie.
	\newblock Deep learning and its applications in biomedicine.
	\newblock {\em Genomics, Proteomics and Bioinformatics}, 16(1):17--32, 2018.
	
	\bibitem{chen2018rational}
	Z.~Chen, F.~Chen, R.~Lai, X.~Zhang, and CT. Lu.
	\newblock Rational neural networks for approximating graph convolution operator
	on jump discontinuities.
	\newblock In {\em 2018 IEEE International Conference on Data Mining (ICDM)},
	pages 59--68. IEEE, 2018.
	
	\bibitem{cheney1961DCoriginal}
	EW. Cheney and HL. Loeb.
	\newblock Two new algorithms for rational approximation.
	\newblock {\em Numerische Mathematik}, 3(1):72--75, 1961.
	
	\bibitem{cheney1962DC}
	EW. Cheney and HL. Loeb.
	\newblock On rational chebyshev approximation.
	\newblock {\em Numerische Mathematik}, 4(1):124--127, 1962.
	
	\bibitem{cheney1964generalized}
	EW. Cheney and HL. Loeb.
	\newblock Generalized rational approximation.
	\newblock {\em Journal of the Society for Industrial and Applied Mathematics,
		Series B: Numerical Analysis}, 1(1):11--25, 1964.
	
	\bibitem{cheney1963survey}
	EW. Cheney and TH. Southard.
	\newblock A survey of methods for rational approximation, with particular
	reference to a new method based on a forumla of darboux.
	\newblock {\em SIAM Review}, 5(3):219--231, 1963.
	
	\bibitem{cheng2018polynomial}
	X.~Cheng, B.~Khomtchouk, N.~Matloff, and P.~Mohanty.
	\newblock Polynomial regression as an alternative to neural nets.
	\newblock {\em arXiv preprint arXiv:1806.06850}, 2018.
	
	\bibitem{DaCruzAlgorithmsQuasiconvex}
	JX. Da~Cruz~Neto, JO. Lopes, and MV. Travaglia.
	\newblock Algorithms for quasiconvex minimization.
	\newblock {\em Optimization}, 60(8-9):1105--1117, 2011.
	
	\bibitem{DaniilidisHadjisavvasMartinezLegas2002}
	A.~Daniilidis, N.~Hadjisavvas, and JE. Martinez-Legaz.
	\newblock An appropriate subdifferential for quasiconvex functions.
	\newblock {\em SIAM Journal on Optimization}, 12:407--420, 2002.
	
	\bibitem{delfosse2021recurrent}
	Q.~Delfosse, P.~Schramowski, A.~Molina, and K.~Kersting.
	\newblock Recurrent rational networks.
	\newblock {\em arXiv preprint arXiv:2102.09407}, 2021.
	
	\bibitem{dutta2005abstract}
	J.~Dutta and AM. Rubinov.
	\newblock Abstract convexity.
	\newblock {\em Handbook of generalized convexity and generalized monotonicity},
	76:293--333, 2005.
	
	\bibitem{fenchel1953}
	W.~Fenchel and DW. Blackett.
	\newblock {\em Convex cones, sets, and functions}.
	\newblock Princeton University, Department of Mathematics, Logistics Research
	Project, 1953.
	
	\bibitem{PhysioNet}
	A.~Goldberger, L.~Amaral, L~. Glass, J.~Hausdorff, PC. Ivanov, R.~Mark, JE.
	Mietus, GB. Moody, CK. Peng, and HE. Stanley.
	\newblock Physiobank, physiotoolkit, and physionet: Components of a new
	research resource for complex physiologic signals., 2000.
	
	\bibitem{goodfellow2016deep}
	I.~Goodfellow, Y.~Bengio, and A.~Courville.
	\newblock {\em Deep learning}.
	\newblock MIT press, 2016.
	
	\bibitem{goyal2019poly}
	M.~Goyal, R.~Goyal, and B.~Lall.
	\newblock Learning activation functions: A new paradigm for understanding
	neural networks.
	\newblock {\em arXiv preprint arXiv:1906.09529}, 2019.
	
	\bibitem{rice1969}
	Rice JR.
	\newblock {\em The Approximation of Functions: Nonlinear and multivariate
		theory}.
	\newblock Addison-Wesley series in computer science and information processing.
	Mass., Addison-Wesley Publishing Company, 1969.
	
	\bibitem{lecun2015deep}
	Y.~LeCun, Y.~Bengio, and G.~Hinton.
	\newblock Deep learning.
	\newblock {\em nature}, 521(7553):436--444, 2015.
	
	\bibitem{loeb1960}
	HL. Loeb.
	\newblock Algorithms for chebyshev approximations using the ratio of linear
	forms.
	\newblock {\em Journal of the Society for Industrial and Applied Mathematics},
	8(3):458--465, 1960.
	
	\bibitem{MLegazquasiconvexduality}
	JE. Martínez-Legaz.
	\newblock Quasiconvex duality theory by generalized conjugation methods.
	\newblock {\em Optimization}, 19(5):603--652, 1988.
	
	\bibitem{mazurowski2008training}
	MA. Mazurowski, PA. Habas, JM. Zurada, JY. Lo, JA. Baker, and GD. Tourassi.
	\newblock Training neural network classifiers for medical decision making: The
	effects of imbalanced datasets on classification performance.
	\newblock {\em Neural networks}, 21(2-3):427--436, 2008.
	
	\bibitem{meltser1996approximating}
	M.~Meltser, M.~Shoham, and LM. Manevitz.
	\newblock Approximating functions by neural networks: a constructive solution
	in the uniform norm.
	\newblock {\em Neural Networks}, 9(6):965--978, 1996.
	
	\bibitem{millan2021multivariate}
	R. D{\'i}az Mill{\'a}n, V.~Peiris, N.~Sukhorukova, and J.~Ugon.
	\newblock Multivariate approximation by polynomial and generalised rational
	functions.
	\newblock {\em arXiv preprint arXiv:2101.11786}, 2021.
	
	\bibitem{bataineh2017}
	B.~Mohammad and M.~Timothy.
	\newblock Neural network for regression problems with reduced training sets.
	\newblock {\em Neural Networks}, 95:1--9, 2017.
	
	\bibitem{molina2019pade}
	A.~Molina, P.~Schramowski, and K.~Kersting.
	\newblock Pad{\'e} activation units: End-to-end learning of flexible activation
	functions in deep networks.
	\newblock In {\em International Conference on Learning Representations}, 2019.
	
	\bibitem{trefethen2018}
	Y.~Nakatsukasa, O.~S{\`e}te, and LN. Trefethen.
	\newblock The aaa algorithm for rational approximation.
	\newblock {\em SIAM Journal on Scientific Computing}, 40(3):A1494--A1522, 2018.
	
	\bibitem{oh2003polynomial}
	SK. Oh, W.~Pedrycz, and BJ. Park.
	\newblock Polynomial neural networks architecture: analysis and design.
	\newblock {\em Computers \& Electrical Engineering}, 29(6):703--725, 2003.
	
	\bibitem{AMCPeirisSukhSharonUgon}
	V.~Peiris, N.~Sharon, N.~Sukhorukova, and J.~Ugon.
	\newblock Generalised rational approximation and its application to improve
	deep learning classifiers.
	\newblock {\em Applied Mathematics and Computation}, 389, 2021.
	
	\bibitem{PeirisSukhorukova}
	V.~Peiris and N.~Sukhorukova.
	\newblock The extension of the linear inequality method for generalized
	rational chebyshev approximation to approximation by general quasilinear
	functions.
	\newblock {\em Optimization}, 0(0):1--21, 2021.
	
	\bibitem{peiris2021}
	V.~Peiris, N.~Sukhorukova, and V.~Roshchina.
	\newblock Deep learning with nonsmooth objectives, 2021.
	
	\bibitem{raissi2018deep}
	M.~Raissi.
	\newblock Deep hidden physics models: Deep learning of nonlinear partial
	differential equations.
	\newblock {\em The Journal of Machine Learning Research}, 19(1):932--955, 2018.
	
	\bibitem{Rubinov00}
	AM. Rubinov.
	\newblock {\em Abstract Convexity and Global Optimization}.
	\newblock Kluwer Academic Publishers, New York, 2000.
	
	\bibitem{RubinovSimsek}
	AM. Rubinov and B.~Simsek.
	\newblock Conjugate quasiconvex nonnegative functions.
	\newblock {\em Optimization}, 35(1):1--22, 1995.
	
	\bibitem{min2017deep}
	M.~Seonwoo, L.~Byunghan, and Y.~Sungroh.
	\newblock Deep learning in bioinformatics.
	\newblock {\em Briefings in bioinformatics}, 18(5):851--869, 2017.
	
	\bibitem{miles2014}
	I.~Steponavi{\v{c}}{\.e}, RJ. Hyndman, K.~Smith-Miles, and L.~Villanova.
	\newblock Efficient identification of the pareto optimal set.
	\newblock In {\em International Conference on Learning and Intelligent
		Optimization}, pages 341--352. Springer, 2014.
	
	\bibitem{telgarsky2017neural}
	M.~Telgarsky.
	\newblock Neural networks and rational functions.
	\newblock In {\em International Conference on Machine Learning}, pages
	3387--3393. PMLR, 2017.
	
	\bibitem{SonyAIrobot}
	D.~Vail and M.~Veloso.
	\newblock Learning from accelerometer data on a legged robot.
	\newblock {\em IFAC Proceedings Volumes}, 37(8):822--827, 2004.	
	
	\bibitem{crouzeix1986note}
	JP~Crouzeix, JA~Ferland, and S~Schaible.
	\newblock A note on an algorithm for generalized fractional programs.
	\newblock {\em Journal of Optimization Theory and Applications},
	50(1):183--187, 1986.
\end{thebibliography}
\end{document}